\numberwithin{equation}{section} \hyphenation{semi-stable}
\newcommand {\PP}{\mathbb{P}}
\newcommand{\D}{\mathcal{D}}
\DeclareMathOperator{\Soc}{Soc}
\def\cocoa{{\hbox{\rm C\kern-.13em
      o\kern-.07em C\kern-.13em o\kern-.15em A}}}
\newtheorem{theorem}{Theorem}[section]
\newtheorem{proposition}[theorem]{Proposition}
\newtheorem{corollary}[theorem]{Corollary}
 \theoremstyle{definition}
\newtheorem{definition}[theorem]{Definition} \theoremstyle{remark}
\newtheorem{remark}[theorem]{Remark}
\newtheorem{example}[theorem]{Example}
\newtheorem{notation}[theorem]{Notation}
\DeclareMathOperator{\Ann}{Ann}
\DeclareMathOperator{\Hom}{Hom}
\DeclareMathOperator{\Hess}{Hess}
\DeclareMathOperator{\Ext}{Ext}
\definecolor{MyDarkGreen}{cmyk}{0.7,0,1,0}
\newcommand{\h}[1]{\-\mbox{-#1}}
\begin{document}

\title{Jordan type of Full Perazzo algebras}

\author[P.\ Macias Marques]{Pedro Macias Marques}
\address{Universidade de \'{E}vora, Centro de Investiga\c{c}\~{a}o em Matem\'{a}tica e Aplica\c{c}\~{o}es (CIMA), Escola de Ci\^{e}ncias e Tecnologia, Rua Rom\~{a}o Ramalho, 59, P--7000--671 \'{E}vora, Portugal.} \email{pmm@uevora.pt}

 \author[R.\ M.\ Miró\h{Roig}]{Rosa M.\ Miró\h{Roig}}
  \address{Facultat de
  Matem\`atiques i Inform\`atica, Universitat de Barcelona, Gran Via de les
  Corts Catalanes 585, 08007 Barcelona, Spain} \email{miro@ub.edu,  ORCID 0000-0003-1375-6547}

  \author[J.\ Pérez]{Josep Pérez}
 \address{Facultat de
  Matem\`atiques i Inform\`atica, Universitat de Barcelona, Gran Via de les
  Corts Catalanes 585, 08007 Barcelona, Spain} \email{jperezdiez@ub.edu}

\thanks{\textit{Mathematics Subject Classification. 13E10, 13D40, 14M05, 13H10.}}
\thanks{\textit{Keywords}. Perazzo hypersurfaces, Jordan type,  Artinian Gorenstein algebras, Hilbert function.}
\thanks{The second author  was partially supported
by the grant PID2020-113674GB-I00}

\begin{abstract}
In this paper, we compute all possible Jordan types of linear forms in any full Perazzo algebra. In some cases we are also able to compute the corresponding Jordan degree\h{type}, which is a finer invariant.
\end{abstract}

\maketitle

\section{introduction}

For any graded Artinian algebra $A$, if we consider an element $\ell$ in the maximal ideal $\mathfrak{m}=\oplus_{i\ge 1}A_i$, the multiplication map by $\ell$ in a finite\h{length} module $M$ over $A$ is a nilpotent endomorphism. 
The Jordan type of $\ell$ in $M$ gives the sizes of the Jordan blocks of the multiplication by $\ell$, seen as a linear map. 
In the most common setup one takes $M=A$. 
Since \(A\) is Artinian, the Hilbert function of $A$ is a finite sequence of positive integers $h=(h_0,\dots,h_d)$, where $h_i=\dim_K A_i$, called the \mbox{$h$\h{vector}} of \(A\).
It turns out that, for a given \mbox{$h$\h{vector}} there are many configurations of the possible Jordan types that can occur. 
In \cite{IMMM} 
A.~Iarrobino, C.~McDaniel, and the first author
give a survey of the subject. They also define another invariant, called \textit{Jordan degree\h{type}} \cite[Definition 2.28]{IMMM}, which is finer than the Jordan type. 
It is important to recall that finding all possible Jordan types, that is, finding all possible canonical Jordan matrices associated to multiplication maps is not always an easy task. 
In some cases, though, one can obtain maximum and minimum possible Jordan types with respect to the dominance partial order (Definition \ref{dominanceOrder}). 
In this paper we study the Jordan type associated to the multiplication map by a linear form $\ell\in A_1$ in a standard graded Artinian Gorenstein algebra $A$. For full Perazzo algebras (cf.\ A.~Cerminara et al.\ 
\cite{CGIZ}, and 
the second and third authors'
\cite{MP2}) only one case is known (see N.~Abdallah et al.\  \cite{A}). 
In this work we first compute, as a toy example, the Jordan type of any linear form in the full Perazzo algebra whose Macaulay dual generator is the classic Perazzo form $F=X_0Y_1^2+X_1Y_1Y_2+X_2Y_2^2\in K[X_0,X_1,X_2,Y_1,Y_2]$ (Example \ref{toyExample}). Then, using the toy example as a guide, we compute all possible linear Jordan types in any full Perazzo algebra, except for one case \ref{jordanTypeMin}, for which we give the whole range of possible linear Jordan types, with respect to the dominance partial order (Corollary \ref{corollaryRangeJordanType}). 
It is worth mentioning that the Jordan type of a linear form $\ell$ in an Artinian algebra $A$ determines whether or not $\ell$ is a Lefschetz element for $A$ (see Remark~3 and Proposition~14 in T.~Harima and J.~Watanabe's paper \cite{HW}). This paper aims to contribute to the study of Perazzo algebras  inasmuch as it can help computing all possible linear Jordan types of different families of Perazzo algebras arising from full Perazzo forms.
\section{Background material}\label{prelim}

Throughout this paper $K$ will be an infinite field. 
In this section we fix the notation and recall basic facts on Hilbert functions, Artinian Gorenstein algebras, Lefschetz properties, as well as on Jordan type and full Perazzo algebras needed in the paper.

\subsection{Hilbert functions of Artinian Gorenstein algebras} 
 Given any standard graded $K$\h{algebra} $A=R/I$ where $R=K[x_0,\dots,x_n]$ and $I$ is a homogeneous ideal of $R$,
we denote by $HF_A:\mathbb{Z} \longrightarrow \mathbb{Z}$ with $HF_A(j)=\dim_KA_j$
its \emph{Hilbert function}. 
If $A$ is Artinian, then $(x_0,\dots,x_n)^d\subseteq I$ for some $d\ge 0$ and  the Hilbert function of $A$ is captured in a finite sequence of positive integers $h=(h_0,h_1,\dots ,h_d)$, called \emph{$h$\h{vector}} of \(A\), where $h_i:=HF_A(i)>0$ and $d$ is the last index with this property.
The \emph{Sperner} number $S_A$ of $A$ is the maximum value of the $h$\h{vector}, i.e.\ $S_A=\max_{i\in\{0,\dots,d\}}\{h_i\}$. 
The \textit{socle} of $A$ is defined as $\Soc(A):=(0:\mathfrak{m})_A$, where $\mathfrak{m}$ is the maximal ideal $(x_0,\dots,x_n)$.

Recall that the depth of a ring is bounded above by its Krull dimension. 
Then, the depth of a standard graded Artinian algebra $A = R/I$ is zero, for its Krull dimension is zero, and hence $A$ is a Cohen Macaulay ring with type
$$
r(A)=\dim_K\Ext_R^0(K,A)=\dim_K\Hom_R(K,A)=\dim_K \Soc(A).
$$ 
Thus, a standard graded  Artinian  algebra $A$ is Gorenstein if and only if its socle is one\h{dimensional} \cite[Theorem 3.2.10]{BH}. 
In this situation, since $A_d$ is always contained in $\Soc(A)$, $A_d=\Soc(A)$ and the integer $d$ is called the \emph{socle degree of} $A$. 
Moreover, the natural pairing given by multiplication $A_i\times A_{d-i} \rightarrow A_d \cong K$ is perfect and hence it yields an isomorphism $A_i^* \cong A_{d-i}$. So the $h$\h{vector} for such algebras is symmetric. 

\vskip 2mm
Hereafter, we will assume the characteristic of $K$ to be either zero or greater than $d$.
\begin{definition}\label{defUnimodal} 
The $h$\h{vector} $h=(h_0,h_1,\dots ,h_d)$ of a graded Artinian $K$\h{algebra} is said to be \emph{unimodal} if $h_0\le h_1\le \cdots \le h_j\ge h_{j+1}\ge \cdots \ge h_d$ for some $j$.
\end{definition}

\begin{definition}\label{compressed}
A \textit{compressed} Artinian Gorenstein algebra of codimension $c$ and socle degree \(d\) is one for which the Hilbert function is as big as possible. Then, its Hilbert function is
    $$
    \Bigl( 1,\tbinom{c}{c-1},\dots,\tbinom{\frac{d}{2}+c-1}{c-1},\dots,\tbinom{c}{c-1},1 \Bigr)
    $$
    in case $d$ is even, and of the form
    $$
    \Bigl( 1,\tbinom{c}{c-1},\dots,\tbinom{\frac{d-1}{2}+c-1}{c-1},\tbinom{\frac{d-1}{2}+c-1}{c-1},\dots,\tbinom{c}{c-1},1 \Bigr)
    $$
    if $d$ is odd, thanks to the symmetry of the $h$\h{vector} of any Artinian Gorenstein algebra.    
\end{definition}

\subsection{Artinian Gorenstein algebras and Macaulay\h{Matlis} duality}

In this subsection we quickly recall the construction of the standard graded Artinian Gorenstein algebra associated to a given form. 

Let $R=K[x_0,\ldots,x_N]$ be a polynomial ring and let $\D=K[X_0,\ldots,X_N]$ be a divided power algebra, regarded as an $R$\h{module} with the contraction action \cite[Appendix A]{IK}
\[
x_i\circ X_0^{j_0} \cdots X_i^{j_i} \cdots X_N^{j_N}=\begin{cases}X_0^{j_0} \cdots X_i^{j_i-1} \cdots X_N^{j_N} & \text{if} \ j_i>0\\ 0 & \text{otherwise}.\\ \end{cases}
\]

If $I\subset R$ is a homogeneous
ideal, the \emph{Macaulay's inverse system} $I^{-1}$ for $I$ is the following $R$\h{submodule} of $\D$
\[
I^{-1}:=\{F\in \D, f\circ F=0 \text{ for all } f\in I \}.
\]

Conversely, if $M\subset \D$ is a graded $R$\h{submodule},
then $$\Ann(M):=\{f\in R : f\circ F = 0 \text{ for all } F \in M\}$$
is a homogeneous ideal in $R$. It is well known that there is a bijective correspondence
\[
\begin{array}{ccc}  \{ \text{Homogeneous ideals } I\subset R \} &
\rightleftharpoons & \{ \text{Graded } R\text{\h{submodules} of }\D \} \\
I & \rightarrow & I^{-1} \\ \Ann(M) & \leftarrow & M \end{array} .
\]

Moreover, $I^{-1}$ is a finitely generated $R$\h{module} if and only if $R/I$ is an Artinian ring. 
For any integer $t\ge0$ we have $HF_{R/I}(t) =\dim_K(R/I)_t=\dim_K(I^{-1})_t$. 
As a particular case, an Artinian $K$\h{algebra} $A=R/I$ is Gorenstein with socle degree $d$ if and only if $I=\Ann_R(F)=\{f\in R\mid f\circ F=0\}$ for some homogeneous polynomial $F\in \D_d$. 
This polynomial, called the \emph{Macaulay dual generator} for $A$, is unique up to a scalar multiple. In this context, since $A$ is Gorenstein, we have
\begin{align*}
HF_A(t) &=\dim_K(A_t)=\dim_K(A_{d-t})=\dim_K(I^{-1})_{d-t} \\
        &=\dim_K \left\langle x_0^{i_0}\cdots x_N^{i_N} \circ F  : i_0+\cdots+i_N=t\right\rangle.
\end{align*}

\begin{remark}
When \(K\) is a field of characteristic zero, the action just defined can be thought of as differentiation \cite[Example A.5, Appendix A]{IK}. In this context, and as a direct application of Macaulay\h{Matlis} duality, for any integer $t\ge 0$ the Artinian Gorenstein algebra associated to the \textit{homogeneous symmetric} polynomial 
$$
h_t(X_0,\dots,X_n):=\sum_{i_0+\cdots+i_n=t}X_0^{i_0}\cdots X_n^{i_n}
$$
in the usual polynomial ring \(K[X_0,\ldots,X_n]\) (see \cite[Section 2]{BMMRN}) is a compressed algebra \cite[Theorem 2.11]{BMMRN}, \cite[Theorem 5.6]{GL}. 
Note that in the divided\h{power} ring \({\D=K[X_0,\ldots,X_n]}\) this polynomial takes the form 
$$
h_t(X_0,\dots,X_n)=\sum_{i_0+\cdots+i_n=t}i_0!\cdots i_n!X_0^{i_0}\cdots X_n^{i_n}.
$$
\end{remark}


\subsection{Jordan type of a graded Artinian algebra}
\begin{definition} \label{jordanBasisDefinition}
Let $A$ be a graded Artinian algebra. The \textit{Jordan type} of a linear form $\ell\in A_1$ in $A$ is the partition of $\dim_K A$, denoted $P_{\ell,A}=(p_1,\dots,p_s)$ with $p_1 \ge \cdots \ge p_s$, where $p_1, \dots,p_s$ represent the block sizes in the Jordan canonical form of the multiplication map $\times \ell: A \longrightarrow A$. A basis $\mathcal{B}=\{\ell^iz_k:1\le k\le s, \ 0\le i \le p_k-1\}$ of $A$, as a vector space over $K$, is called a \textit{pre\h{Jordan}} basis. A \textit{string} is a sequence
$$
S_k:=(z_k,\ell z_k,\dots, \ell^{p_k-1}z_k)
$$
of $\mathcal{B}$. A \textit{bead} is an element $\ell^iz_k$ in a string $S_k$. A \textit{Jordan basis} for $\ell$ is a pre\h{Jordan} basis $\mathcal{B}$ satisfying $\ell^{p_k}z_k=0$ for each $k\in\{1,\dots,s\}$. It is well known that one can construct a Jordan basis from any pre\h{Jordan} basis \cite[Lemma 2.2 (iii)]{IMMM}. 
If $\nu_k$ is the degree of $z_k$, then the sequence
\[
\mathcal{S}_{\ell,A}=((p_1,\nu_1),\dots,(p_s,\nu_s))
\]
is an invariant of $(A,\ell)$ \cite[Lemma 2.2(iv)]{IMMM}. It is the \textit{Jordan degree\h{type}} of $A$ with respect to $\ell$.
\end{definition}

 We illustrate the definition with an example.

\begin{example}\label{exampleJordanTypeComputation}
Let $A=K[x,y]/(x^3,xy^2,y^3)$. A basis for $A$ is, for instance, 
\[
\{1,x,y,x^2,xy,y^2,x^2y\}.
\]
So the $h$\h{vector} is $(1,2,3,1)$ and $\dim_KA=7$. Consider the multiplication map 
    $$
    \times \ell : A \rightarrow A,
    $$
    where $\ell=x+y$. We have the following pre\h{Jordan} basis
\begin{equation*}
\begin{gathered}
\xymatrix@R=.1ex
{
1 \ar@{|->}^-{\ell}[r] &x+y \ar@{|->}^-{\ell}[r] &x^2+2xy+y^2 \ar@{|->}^-{\ell}[r] &3x^2y \\
&x \ar@{|->}^-{\ell}[r] &x^2+xy \\
&& y^2
}
\end{gathered}
\end{equation*}
because \(x^2+xy\), when multiplied by \(\ell\), yields \(2x^2y\), which is a scalar multiple of \(3x^2y\), an element of the previous string; and in the third string we have $y^2\xmapsto{\ell}0$.
     But any pre\h{Jordan} basis yields a Jordan basis, as we already pointed out in Definition \ref{jordanBasisDefinition}. Indeed, although the last bead in the second string does not yield zero when multiplied by $\ell$, notice that $2\ell^3 - 3 \ell^2 x = 0$, i.e.\ $\ell^2(2\ell - 3x)=0$, so we could have considered, instead, the following string
    $$
    2\ell - 3x = 2y-x \xmapsto{\ell} 2y^2+xy-x^2 \xmapsto{\ell} 0.
    $$
    Thus, a Jordan basis is composed by the following strings
\[
(1,\ell,\ell^2,\ell^3),\, (2y-x,2y^2+xy-x^2), \text{ and }(y^2).
\]
Therefore $P_{\ell,A} = (4,2,1)$. Moreover, since the strings start at degrees \(0\), \(1\), and \(2\), the Jordan degree\h{type} is  
\[
\mathcal{S}_{\ell,A}=((4,0),(2,1),(1,2)),
\]
which can also be denoted $\mathcal{S}_{\ell,A}=(4_0,2_1,1_2)$.
\end{example}

Now let us define a partial order, called \text{dominance}, for partitions of positive numbers. 

\begin{definition}
\label{dominanceOrder}
Let $P=(p_1,\dots,p_s)$ and $Q=(q_1,\dots,q_r)$, with $p_1\ge \cdots \ge p_s$ and $q_1\ge \cdots \ge q_r$, be two partitions of a positive integer $n$. 
We say that $P$ dominates $Q$ (written $P\ge Q$) if, for each $k\in\{1,\dots,\min\{s,r\}\}$,
$$
\sum_{i=1}^k p_i \ge \sum_{i=1}^k q_i.
$$
\end{definition}

Given a graded Artinian algebra $A$, we will say that $\ell \in A_1$ is \textit{general} when it belongs to the open dense subset $U\subseteq A_1$ of linear forms whose Jordan type is the maximum with respect to the dominance order -- this definition is the reason we assumed that \(K\) is infinite. 
The \textit{generic linear Jordan type} of $A$, denoted $P_{A}$, is the Jordan type $P_{\ell,A}$ where $\ell $ is a general linear form (cf.\ \cite[Lemma 2.54, Definition 2.55]{IMMM}, where the notation for generic linear Jordan type is $P_{1,A}$, to distinguish it from the so called generic Jordan type of a general element of the maximal ideal). For instance, the form $\ell=x+y$ in Example \ref{exampleJordanTypeComputation} is general. 
Indeed, we know from Proposition 3.64 in \cite{HMMNWW} that the conjugate partition of the Hilbert function dominates the Jordan type of any linear form. Since the Hilbert function is \({(1,2,3,1)}\), its conjugate is \({(4,2,1)}\), which is the Jordan type of \({x+y}\).

\begin{notation}
Whenever a length is repeated in a Jordan type, or in a Jordan degree\h{type}, we will use exponentiation in order to abbreviate notation. For instance, if 
    $$
    \mathcal{S}_{\ell,A}=((4,0),(4,1),(4,1),(3,1),(3,1),(1,3),(1,3)),
    $$
    we will write $P_{\ell,A}=(4^3,3^2,1^2)$ and $\mathcal{S}_{\ell,A}=(4_0,4_1^2,3_1^2,1_3^2)$.
\end{notation}

It is important to notice that computing all possible linear Jordan types for any Artinian algebra is not feasible. However, if we restrict to Perazzo algebras, which are Gorenstein, there is one paper (see \cite[Section 4.2]{A}) 
where the authors compute all the linear Jordan types for a certain kind of Perazzo algebras, among those associated to Perazzo threefolds in $\PP^4$ having the smallest possible $h$\h{vector}.

\subsection{Perazzo algebras}

\begin{definition}
\label{perazzohypersur} 
Fix integers $n, m\ge 2$. 
A \emph{Perazzo} hypersurface  ${X\subset \PP^{n+m}}$ of degree $d\ge 3$ is a hypersurface defined by a form
\[
F=X_0P_0+X_1P_1+\cdots +X_nP_n+G\in K[X_0,\dots ,X_n,Y_1\dots ,Y_m]_d,
\]
where \(P_0,\ldots,P_n\in K[Y_1,\dots ,Y_m]_{d-1}\) are algebraically dependent but linearly independent and \(G\in K[Y_1,\dots ,Y_m]_{d}\). By abuse of terminology, we call $F$ a \emph{Perazzo} form. A \emph{Perazzo} algebra $A_F$ is a standard graded Artinian Gorenstein algebra whose Macaulay dual generator is a Perazzo form $F$ of degree $d\ge 3$. 
\end{definition}
The linear independence of the forms $P_i$ guarantees that $V(F)$ is not a cone, while the algebraic dependence and the fact that $n+m\ge 4$ guarantee that the Hessian determinant of $F$ vanishes identically \cite{GN} (see \cite{L} and \cite{WB} for a modern version of Gordan and Noether's results). 
Perazzo hypersurfaces are the first known hypersurfaces with vanishing Hessian which are not cones.
Artinian Gorenstein algebras associated to Perazzo forms have been deeply studied from the viewpoint of the Lefschetz properties (check \cite[Definition 3.1 and Definition 3.8]{HMMNWW} for the definition). 
The importance of studying Perazzo algebras stems from the fact that they cannot have the strong Lefschetz property (SLP), because $\det(\Hess(F))\equiv 0$ (see \cite[Theorem 3.1]{MW}), but they can still satisfy the WLP. In \cite{A}, \cite{FMMR}, \cite{MMR23} and \cite{MP} the authors characterize certain classes of Perazzo algebras with the WLP.  

\begin{definition}
A Perazzo algebra $A_F$ is said to be \emph{full} if $\{P_0,\dots,P_n\}$ is a basis of $K[Y_1,\dots,Y_m]_{d-1}$. So, for full Perazzo algebras we necessarily have ${n+1=\binom{d+m-2}{m-1}}$. In this case, and in order to ease the comprehension of the main result in this paper, we will assume, after a change of variables, that $F=\sum_{i_1+\cdots+i_m=d-1}X_{i_1,\dots,i_m}Y_1^{i_1}\cdots Y_m^{i_m}$. 
\end{definition}

It is well known that, for any $d\ge 3$, the Hilbert function of a full Perazzo algebra $A_F$ is \cite[Proposition 3.3]{CGIZ}
\begin{equation}\label{HFfullPerazzoGeneral}
HF_{A_F}(i)=
\begin{cases}
    \binom{i+m-1}{m-1} + \binom{d-i+m-1}{m-1} \quad \text{for } 1\le i \le \lfloor d/2 \rfloor \\\\
    \text{symmetry}
\end{cases}.
\end{equation}
Thus,
\begin{align}\label{dimensionAF} 
    \dim_{K}A_F &= 2 + \sum_{i=1}^{d - 1} \Bigl( \tbinom{i+m-1}{m-1} + \tbinom{d-i+m-1}{m-1} \Bigr) \\
    &= 2+2\sum_{j=m}^{d+m-2}\tbinom{j}{m-1} \nonumber\\
    &= 2+2\Bigl( \tbinom{d+m-1}{m}-1 \Bigr) \nonumber \\
    &= 2\tbinom{d+m-1}{m}. \nonumber 
\end{align}

In particular, if $d=3$ we have
\begin{equation}\label{HFfullPerazzoParticularCases}
HF_{A_F}=(1,n+m+1,n+m+1,1) \quad \text{ and } \quad \dim_{K}A_F= 2(n+m+2).    
\end{equation}

\section{Jordan type of a full Perazzo algebra}\label{genericJordanTypeFPA}

In this section we compute all possible linear Jordan types in any full Perazzo algebra.
Let 
$$
R:=K[x_{i_1,i_2,\dots,i_m},y_1,\dots,y_m \ : \ i_1+\cdots+i_m=d-1]
$$ 
be a polynomial ring, and let 
$$
\D:=K[X_{i_1,i_2,\dots,i_m},Y_1,\dots,Y_m \ : \ i_1+\cdots+i_m=d-1]
$$
be a divided power algebra, regarded as an $R$\h{module} with the contraction action. Let $A_F=R/\Ann_RF$ be a full Perazzo algebra whose Macaulay dual generator ${F\in \mathcal{D}_d}$ is in the canonical form, i.e.\ $F=\sum_{i_1+\cdots+i_m=d-1}X_{i_1,\dots,i_m}Y_1^{i_1}\cdots Y_m^{i_m}$. The goal of this section is to compute the Jordan type of any linear form 
$$
\ell=\sum a_{i_1,\dots,i_m}x_{i_1,\dots,i_m} + \sum b_jy_j \in [A_F]_1.
$$
Let us start with a toy example.

\begin{example}
\label{toyExample}
Let $F=X_{2,0}Y_1^2+X_{1,1}Y_1Y_2+X_{0,2}Y_2^2\in \D_3$. Notice that $(x_{2,0},x_{1,1},x_{0,2})^2\subset \Ann_RF$ and for any $g\in K[y_1,y_2]_t$ we have that ${g\in \Ann_RF}$ if and only if ${t\ge 3}$. Moreover, by (\ref{HFfullPerazzoParticularCases}), we know that the $h$\h{vector} of $A_F:=R/\Ann_R F$ is $(1,5,5,1)$ and hence $\dim_K(A_F)=12$. 
Since we are assuming throughout the paper that the characteristic of \(K\) is either zero or higher than the socle degree, in this case it can be anything except \(2\) or \(3\).

We will prove that the possible Jordan types of a non\h{zero} form $\ell\in [A_F]_1$ in $A_F$ are in the following chain with respect to the dominance order.
$$
(2^3,1^6) < (2^4,1^4) < (3^2,2^2,1^2) < (4,2^3,1^2).
$$
In fact, writing \({\ell=a_{2,0}x_{2,0}+a_{1,1}x_{1,1}+a_{0,2}x_{0,2}+b_1y_1+b_2y_2}\), we have the following possibilities:

\begin{enumerate}[label=(\alph*)]
\item If $a_{2,0},b_1 \not= 0$ or $a_{0,2},b_2 \not=0$, we may assume, without loss of generality, that $a_{2,0},b_1\ne 0$. Then $P_{\ell,A_F}=(4,2^3,1^2)$ and $\mathcal{S}_{\ell,A_F}=(4_0,2_1^3,1_1,1_2)$. Indeed, $\ell^3= 3a_{2,0}b_1^2x_{2,0}y_1^2\not=0$ and hence there is exactly one string of length four. We have the following pre\h{Jordan} basis.
\begin{equation*}
\begin{gathered}
\xymatrix@R=.1ex
{
    1 \ar@{|->}[r] & \ell \ar@{|->}[r] & \ell^2 \ar@{|->}[r] & \ell^3\\
    &x_{2,0} \ar@{|->}[r] & x_{2,0}\ell && \\
    &x_{1,1} \ar@{|->}[r] & x_{1,1}\ell && \\
    &y_2 \ar@{|->}[r] & y_2\ell && \\
    &x_{0,2} &&& \\
    && y_2^2. &&
}
\end{gathered}
\end{equation*}
\item If \(\ell={b_1y_1+b_2y_2}\), we may assume, without loss of generality, that \({b_1 \ne 0}\).  
Aslo, if \(\ell\) is either \({a_{1,1}x_{1,1}+a_{0,2}x_{0,2}+b_1y_1}\), with \({b_1 \ne 0}\), or \({a_{2,0}x_{2,0}+a_{1,1}x_{1,1}+b_2y_2}\), with \({b_2 \ne 0}\)), we may assume that \({\ell=a_{1,1}x_{1,1}+a_{0,2}x_{0,2}+b_1y_1}\), again with \({b_1 \ne 0}\). In any of these cases, we have \({P_{\ell,A_F}=(3^2,2^2,1^2)}\) and \({\mathcal{S}_{\ell,A_F}=(3_0,3_1,2_1^2,1_1,1_2)}\). Indeed, we have the following pre\h{Jordan} basis.
\begin{equation*}
\begin{gathered}
\xymatrix@R=.1ex
{
1 \ar@{|->}[r] &\ell \ar@{|->}[r] &\ell^2 \\
&x_{2,0} \ar@{|->}[r] &x_{2,0}\ell \ar@{|->}[r] &x_{2,0}\ell^2 \\
&x_{1,1} \ar@{|->}[r] &x_{1,1}\ell & \\
&y_2 \ar@{|->}[r] &y_2\ell & \\
&x_{0,2} && \\
&& y_2^2. &
}
\end{gathered}
\end{equation*}
To check this, observe that if $\ell=a_{1,1}x_{1,1}+a_{0,2}x_{0,2}+b_1y_1$ it is clear that all these elements are linearly independent; if $\ell=b_1y_1+b_2y_2$ they are also linearly independent because
\begin{align*}
b_1^2X_{2,0}+2b_1b_2X_{1,1}+b_2^2X_{0,2} &= \ell^2\circ F\\
b_1X_{1,1}+b_2X_{0,2} &= y_2\ell\circ F \\
X_{0,2} &= y_2^2\circ F.
\end{align*}
\item If $\ell=a_{2,0}x_{2,0}+a_{1,1}x_{1,1}+a_{0,2}x_{0,2}$, then $P_{\ell,A_F}=(2^a,1^b)$, where $2a + b=\dim_K(A_F)$ and $a=\dim_K(A_{\ell\circ F})$. Indeed, $\ell\circ F=a_{2,0}Y_1^2+a_{1,1}Y_1Y_2+a_{0,2}Y_2^2$ and the $h$\h{vector} of $A_{\ell\circ F}$ is $(1,r,1)$, where $r$ is given by the dimension of the vector subspace of partials of $\ell\circ F$ of degree one, due to Macaulay\h{Matlis} duality. So
$$
r=rk\begin{pmatrix} a_{2,0} & a_{1,1} \\ a_{1,1} & a_{0,2}\end{pmatrix} =\begin{cases} 1 \ \text{if } a_{2,0}a_{0,2} - a_{1,1}^2=0 \\ 2 \ \text{otherwise}. \end{cases}
$$
Now, since there is no string of length one starting with a vector in $A_{\ell\circ F}$, the dimension of $A_{\ell\circ F}$ must be the number of length two strings, because $\ell^2\subset \Ann_R F$. Therefore, either $P_{\ell,A_F}=(2^4,1^4)$ and $\mathcal{S}_{\ell,A_F}=(2_0,2_1^2,2_2,1_1^3,1_2)$ or $P_{\ell,A_F}=(2^3,1^6)$ and $\mathcal{S}_{\ell,A_F}=(2_0,2_1,2_2,1_1^6)$.    
\end{enumerate}
\end{example}

We are now ready to state the main result of this paper.

\begin{theorem}\label{mainTheorem}
    Let $A_F=R/\Ann_RF$ be a full Perazzo algebra, with 
    $$
    F=\sum_{i_1+\cdots+i_m=d-1}X_{i_1,\dots,i_m}Y_1^{i_1}\cdots Y_m^{i_m},
    $$ 
    and consider any linear form
    $$
    \ell = \sum_{i_1+\cdots+i_m=d-1} a_{i_1,\dots,i_m}x_{i_1,\dots,i_m} + \sum_{j=1}^m b_jy_j \in [A_F]_1.
    $$
    Then the Jordan type of \(\ell\) in \(A_F\) is one of the following
\begin{enumerate}[label=(\roman*)]
\item \label{jordanTypeMax} $P_{\ell,A_F} = \Bigl( d+1, (d-1)^{2m-1},(d-2)^{2\binom{m}{m-2}},\dots,1^{2\binom{m+d-3}{m-2}} \Bigr)$, or
\item \label{jordanTypeMiddle} $P_{\ell,A_F} = \Bigl( d^2, (d-1)^{2\binom{m-1}{m-2}},(d-2)^{2\binom{m}{m-2}},\dots,1^{2\binom{m+d-3}{m-2}} \Bigr)$, or
\item \label{jordanTypeMin} $P_{\ell,A_F} = \bigl( 2^a,1^b \bigr)$ where $2a+b=\dim_KA_F$ and $a=\dim_KA_{\ell\circ F}$. 
\end{enumerate}
Furthermore, in cases \ref{jordanTypeMax}  and \ref{jordanTypeMiddle} the corresponding Jordan degree\h{type}s are
\begin{align*}
\mathcal{S}_{\ell,A_F} &= \Bigl( (d+1)_0, (d-1)_1^{2m-1},(d-2)_1^{\binom{m}{m-2}},(d-2)_2^{\binom{m}{m-2}},\dots,1_1^{\binom{m+d-3}{m-2}},1_{d-1}^{\binom{m+d-3}{m-2}}\Bigr), \text{ and} \\ \mathcal{S}_{\ell,A_F} &= \Bigl( d_0,d_1, (d-1)_1^{2\binom{m-1}{m-2}},(d-2)_1^{\binom{m}{m-2}},(d-2)_2^{\binom{m}{m-2}},\dots,1_1^{\binom{m+d-3}{m-2}},1_{d-1}^{\binom{m+d-3}{m-2}}\Bigr)
\end{align*}
respectively.
\end{theorem}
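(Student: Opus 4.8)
The plan is to reduce the computation of the whole Jordan type, and in fact of the Jordan degree type, to the dimensions and Hilbert functions of the Artinian Gorenstein algebras $A_{\ell^k\circ F}$ attached to the successive contractions $\ell^k\circ F$. The starting point is a duality lemma: writing $\phi\colon A_F\to S$ for the injective Macaulay--Matlis map $\bar g\mapsto g\circ F$, one has $\phi(\ell^k A_F)=\ell^k\circ(R\circ F)=R\circ(\ell^k\circ F)$, which is exactly the inverse system of $\ell^k\circ F$; since $\phi$ is injective this yields $\rank(\times\ell^k\colon A_F\to A_F)=\dim_K A_{\ell^k\circ F}$, together with its graded refinement $\dim_K\ell^k[A_F]_j=HF_{A_{\ell^k\circ F}}(j)$. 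As $\times\ell$ is nilpotent, the conjugate of $P_{\ell,A_F}$ has $j$-th part $\rank(\times\ell^{\,j-1})-\rank(\times\ell^{\,j})$, so the sequence $\bigl(\dim_K A_{\ell^k\circ F}\bigr)_{k\ge 0}$ determines $P_{\ell,A_F}$, while the graded data $HF_{A_{\ell^k\circ F}}$ together with the Gorenstein symmetry of $A_F$ determine the starting degrees of the strings, hence $\mathcal S_{\ell,A_F}$ (cf.\ \cite{IMMM}).

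Next I would compute the contractions. Writing $\ell=\ell_x+\ell_y$ with $\ell_x=\sum a_{i_1,\dots,i_m}x_{i_1,\dots,i_m}$ and $\ell_y=\sum b_jy_j$, every monomial of $F$ is linear in the $X$-variables, so $x_{i}x_{i'}\circ F=0$ for all $i,i'$; consequently $\ell_x^2\circ F=0$ and, expanding the binomial, $\ell^k\circ F=\ell_y^k\circ F+k\,\ell_y^{k-1}\circ(\ell_x\circ F)$ for $k\ge1$. When $\ell_y\ne0$ I would first normalize $\ell_y=y_1$: the canonical form $F=\sum_{i_1+\cdots+i_m=d-1}X_{i_1,\dots,i_m}Y_1^{i_1}\cdots Y_m^{i_m}$ is the identity element of $\mathrm{End}(K[Y_1,\dots,Y_m]_{d-1})$, hence is invariant under the $GL_m$-action on $Y_1,\dots,Y_m$ together with its contragredient on the $X$'s; this action is by graded automorphisms of $A_F$, which preserve both the Jordan type and the Jordan degree type, and it carries any nonzero $\ell_y$ to $y_1$. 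With $\ell_y=y_1$ the term $\ell_y^k\circ F$ is, for $1\le k\le d-1$, a \emph{full} Perazzo form of socle degree $d-k$ in $Y_1,\dots,Y_m$ (its dual forms are exactly the degree-$(d-1-k)$ monomials, each once), while $k\,\ell_y^{k-1}\circ(\ell_x\circ F)$ is a form in the $Y$'s alone.

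The crux is to show that this purely-$Y$ remainder does not affect $A_{\ell^k\circ F}$. Setting $G=\ell^k\circ F$ and grading $R_t\circ G$ by $X$-degree, the $X$-degree-$0$ part is pure in $Y$ and, by fullness of the $X$-linear term, already equals the entire space $K[Y_1,\dots,Y_m]_{d-k-t}$ produced by applying one $x$-variable followed by $y$-monomials; the $Y$-remainder therefore contributes nothing new. Hence $R_t\circ G$ splits as this pure-$Y$ space together with the $X$-linear space spanned by the $y^\beta\circ\ell_y^k\circ F$, exactly as for the full Perazzo form of socle degree $d-k$. I would conclude that $HF_{A_{\ell^k\circ F}}$ coincides with the full Perazzo Hilbert function \eqref{HFfullPerazzoGeneral} of socle degree $d-k$ and, summing as in \eqref{dimensionAF}, that $\dim_K A_{\ell^k\circ F}=2\binom{d-k+m-1}{m}$ for $1\le k\le d-1$, independently of $\ell_x$; this is the step where the ``full'' hypothesis is essential.

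Finally I would assemble the three cases. If $\ell_y=0$ then $\ell^2\circ F=0$, so $\times\ell^2=0$ and $P_{\ell,A_F}=(2^a,1^b)$ with $a=\rank(\times\ell)=\dim_K A_{\ell\circ F}$, giving \ref{jordanTypeMin}. If $\ell_y\ne0$ the only remaining freedom is at the top: $\ell^d\circ F=d\,\ell_y^{d-1}\circ(\ell_x\circ F)$ is a scalar, equal after normalization to $d\,a_{d-1,0,\dots,0}$, and $\ell^{d+1}\circ F=0$ always. When this scalar vanishes, $\dim_K A_{\ell^d\circ F}=0$, and feeding $\dim_K A_{\ell^k\circ F}=2\binom{d-k+m-1}{m}$ for $k\le d-1$ together with $\dim_K A_{\ell^d\circ F}=0$ into the conjugate-partition formula yields \ref{jordanTypeMiddle}; when it is nonzero, $\dim_K A_{\ell^d\circ F}=1$, which alters only the top of the partition and yields \ref{jordanTypeMax}. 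The corresponding Jordan degree types then follow from the computed Hilbert functions $HF_{A_{\ell^k\circ F}}$ and the Gorenstein symmetry $\nu\leftrightarrow d-\nu-p+1$ on the starting degrees of the length-$p$ strings, in agreement with Example \ref{toyExample}. I expect the main obstacles to be precisely this bookkeeping of starting degrees and the careful verification that the $Y$-remainder is absorbed, both of which hinge on the fullness of $F$.
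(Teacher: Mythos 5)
Your proposal is correct, but it follows a genuinely different route from the paper. The paper proves the theorem constructively: in each case it exhibits an explicit pre-Jordan basis consisting of four families of strings (powers of $\ell$, strings headed by $x_{d-1,0,\dots,0}$, by the other $x$-variables, and by pure $y$-monomials), checks linear independence of the corresponding contractions of $F$, verifies that the bead count equals $\dim_K A_F$ as in \eqref{dimensionAF}, and reads the Jordan degree type directly off the starting degrees of those strings. You instead recover the Jordan type from the rank sequence $\rank(\times\ell^k)=\dim_K A_{\ell^k\circ F}$ via Macaulay duality and the conjugate-partition formula, reducing everything to the claim $\dim_K A_{\ell^k\circ F}=2\binom{d-k+m-1}{m}$ for $1\le k\le d-1$ whenever $\ell_y\ne 0$. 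Your two key devices are sound: the $GL_m$-invariance of the canonical full Perazzo form (the identity of $V^*\otimes V$ with $V=K[Y_1,\dots,Y_m]_{d-1}$) legitimately normalizes $\ell_y$ to $y_1$, and the absorption of the pure-$Y$ remainder $k\,\ell_y^{k-1}\ell_x\circ F$ works because, for $t\ge1$, the contractions $x_iy^\beta\circ(\ell^k\circ F)$ already span all of $K[Y_1,\dots,Y_m]_{d-k-t}$ by fullness, so modulo that space the contractions $y^\beta\circ(\ell^k\circ F)$ contribute exactly the $X$-linear parts $y^\beta\circ(\ell_y^k\circ F)$, giving the same Hilbert function as when $\ell_x=0$. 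What your approach buys: it treats all linear forms uniformly -- in particular it covers forms such as $x_{1,1}+y_1+y_2$ (for $m=2$, $d=3$), with several nonzero $b_j$ and a nonzero $a$-part, which the paper's case hypotheses, read literally, do not reach and handle only implicitly through ``without loss of generality'' reductions -- and it replaces linear-independence checks by dimension counts, the trichotomy becoming simply $\ell_y=0$, versus $\ell_y\ne0$ with $\ell^d\circ F$ zero or nonzero. What the paper's approach buys: explicit strings make the Jordan degree type immediate, whereas in your scheme it requires the graded refinement $r_{k,j}:=\rank\bigl(\ell^k\colon [A_F]_j\to[A_F]_{j+k}\bigr)=HF_{A_{\ell^k\circ F}}(j)$ (which uses the Gorenstein symmetry of $A_{\ell^k\circ F}$, of socle degree $d-k$) together with the inclusion--exclusion count $\#\{\text{strings of length }p\text{ starting in degree }\nu\}=r_{p-1,\nu}-r_{p,\nu}-r_{p,\nu-1}+r_{p+1,\nu-1}$; this bookkeeping is the sketchiest part of your write-up and should be made explicit. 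One further small repair: for $k\ge d-2$ the form $\ell_y^k\circ F$ has degree at most $2$, so it is not literally a full Perazzo form and \eqref{HFfullPerazzoGeneral} cannot be cited for it, but your own spanning argument still yields $\dim_K A_{\ell^k\circ F}=2\binom{d-k+m-1}{m}$ in those degenerate degrees, so the conclusion stands.
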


\begin{proof}
Notice that $(x_{i_1,\dots,i_m}\ : \ i_1+\cdots+i_m=d-1)^2\subset \Ann_R F$ and for $g\in k[y_1,\dots,y_m]_t$ we have that $g\in\Ann_RF$ if and only if $t\ge d$. 
We have the following possibilities:

\vskip 2mm

(i) \label{genJordanType} If, for any $k\in\{1,\dots,m\}$, $a_{\substack{0,\dots,0,d-1,0,\dots,0 \\ (k)}}\not=0$ and $b_k\not=0$, we may assume, without loss of generality, that $a_{d-1,0,\dots,0}\not=0$ and $b_1\not=0$. Then 
$$
P_{\ell,A_F} = \Bigl( d+1, (d-1)^{2m-1},(d-2)^{2\binom{m}{m-2}},\dots,1^{2\binom{m+d-3}{m-2}} \Bigr).
$$
Indeed, a pre\h{Jordan} basis is composed by the following strings.
\begin{align}
\label{jordanTypeMaxString1}
    &(1,\ell,\dots,\ell^{d})\\
\label{jordanTypeMaxString2}
    &(x_{d-1,\dots,0},\dots,x_{d-1,\dots,0}\ell^{d-2})\\
\label{jordanTypeMaxString3}
    &\{(x_{p,\dots,i_m},\dots,x_{p,\dots,i_m}\ell^p) : p\in\{0,\dots,d-2\}\}\\
\label{jordanTypeMaxString4}
    &{\textstyle \bigl\{(y_2^{j_2}\cdots y_m^{j_m},\dots,y_2^{j_2}\cdots y_m^{j_m}\ell^p) : p\in\{0,\dots,d-2\}; \sum_{i\not= 1}j_i=d-p-1\bigr\}}.
\end{align}
Indeed, all these vectors are linearly independent because there is no degree such that the forms involved are obtained from the same monomials. Now, counting  the beads in all these strings, we obtain        
        \begin{equation}\label{numberBeadsGeneric}
        2\sum_{i=1}^{d}i\tbinom{m+d-2-i}{m-2} = 2\tbinom{m+d-1}{m} = \dim_K(A_F) \quad \eqref{dimensionAF}    
        \end{equation}        
        where the possible ways of choosing $m$ integers from the set $\{1,2,\dots,m+d-1\}$ is computed as follows: given that $m\ge 2$, if the second smallest number we choose is $i+1$, then we can choose the smallest number in $i$ ways. The remaining $m-2$ numbers can be chosen in $\tbinom{m+d-2-i}{m-2}$ ways, so that gives $i\tbinom{m+d-2-i}{m-2}$ combinations. Now, sum over all possible $i$. 
        
        Finally, let us check the Jordan degree\h{type}. We may observe that the string \eqref{jordanTypeMaxString1} has length $d+1$ and starts at degree zero, which justifies the first part $(d+1)_0$. Next, we see that if \(p=d-2\) in \eqref{jordanTypeMaxString3}, we have \(m-1\) strings of length $d-1$, starting at degree one, i.e.\ the strings starting by the beads \(x_{d-2,1,0,\ldots,0}\) up to \(x_{d-2,0,\ldots,0,1}\). Moreover, the string \eqref{jordanTypeMaxString2} has also length \(d-1\). 
        If \(p=d-2\) in \eqref{jordanTypeMaxString4}, we also have \(m-1\) strings of degree $d-1$, starting at degree one, i.e.\ the strings starting by the beads \(y_2\) up to \(y_m\).
        Thus, we get a total of \(2m-1\) strings of length \(d-1\), starting at degree one, which explains the parts \((d-1)_1^{2m-1}\).
        Note that, for strings of length \(d-2\), we find \(\tbinom{m}{m-2}\) strings in \eqref{jordanTypeMaxString3}, starting at degree one, and another \(\tbinom{m}{m-2}\) strings in \eqref{jordanTypeMaxString4}, but starting at degree two (we take \(p=d-3\) in both cases).  We may check the remaining parts of the Jordan degree\h{type} in a similar way.

\vskip 2mm
(ii) Suppose that either 
\begin{itemize}
\item[-] there is an integer \({k\in\{1,\dots,m\}}\) such that \({a_{\substack{0,\dots,0,d-1,0,\dots,0 \\ (k)}}=0}\), \({b_k \not=0}\), and \({b_i=0}\) for all \({i \not=k}\), or
\item[-] we have \({\ell=b_1y_1+\cdots+b_my_m}\).
\end{itemize}
Then 
\[
P_{\ell,A_F} = \Bigl( d^2, (d-1)^{2\binom{m-1}{m-2}},(d-2)^{2\binom{m}{m-2}},\dots,1^{2\binom{m+d-3}{m-2}} \Bigr).
\]
Indeed, we may assume, without loss of generality, that 
\begin{itemize}
\item[-] \({a_{d-1,0,\dots,0}=0}\), \({b_1 \not=0}\), and \({b_i=0}\) for all \({i >1}\), or
\item[-] we have \({\ell=b_1y_1+\cdots+b_my_m}\), with \({b_1 \not=0}\).
\end{itemize}
We can then check that a pre\h{Jordan} basis is composed by the following strings.
        \begin{align}
        \label{jordanTypeMiddlestring1}
            &(1,\ell,\dots,\ell^{d-1})\\
        \label{jordanTypeMiddlestring2}
            &(x_{d-1,\dots,0},\dots,x_{d-1,\dots,0}\ell^{d-1})\\
        \label{jordanTypeMiddlestring3}
            &\{(x_{p,\dots,i_m},\dots,x_{p,\dots,i_m}\ell^p) : p\in\{0,\dots,d-2\}\}\\
        \label{jordanTypeMiddlestring4}
            &{\textstyle \bigl\{(y_2^{j_2}\cdots y_m^{j_m},\dots,y_2^{j_2}\cdots y_m^{j_m}\ell^p) : p\in\{0,\dots,d-2\}; \sum_{i\not= 1}j_i=d-p-1\bigr\}}.
        \end{align}
        Certainly, if $a_{d-1,0,\dots,0}=0$, $b_1\not=0$ and $b_i=0$ for any $i\not=1$, then  all these vectors are linearly independent. If $\ell=b_1y_1+\cdots+b_ky_k$, then $\ell^{d-1}\circ F$ and $y_2^{j_2}\cdots y_m^{j_m}\ell^p\circ F$, where $p\in\{0,\dots,d-2\}$ and $\sum_{i\not= 1}j_i=d-p-1$, are linearly independent because, taking the basis $\{ X_{i_1,\dots,i_m} : i_1+\cdots+i_m=d-1 \}$ in lexicographic order, no two of them have their first non\h{zero} coordinate in the same position. Adding the beads in all these strings, we obtain the same as in \eqref{numberBeadsGeneric}. Finally, we may check the Jordan degree\h{type} as we did for the case (i). 
\vskip 2mm
(iii) \label{formsx} If $\ell=\sum_{i_1+\cdots+i_m=d-1}a_{i_1,\dots,i_m}x_{i_1,\dots,i_m}$, we have $\ell ^2\subset \Ann_R F$ and therefore there are only strings of lengths one and two, which means that the Jordan type is
$P_{\ell,A_F} = \bigl( 2^a,1^b \bigr)$ where $2a+b=\dim_KA_F$ and $a=\dim_KA_{\ell\circ F}$. In order to obtain $P_{\ell,A_F}$ it suffices to compute the number of length two strings, that is, $\dim_K(A_{\ell\circ F})$.        
  
\end{proof}

\begin{corollary}\label{partsOfGenericJordanTypeFullPerazzo}
    The number of parts in a generic linear Jordan type of a full Perazzo algebra is $2(n+1)$.
    \end{corollary}

    \begin{proof}
       Indeed, by Theorem \ref{mainTheorem} (i), we have: 

    \begin{align*}
        1 + 2m-1 + 2 \sum_{i=0}^{d-3}\tbinom{m+i}{m-2} &= 2m + 2\sum_{i=m}^{d+m-3}\tbinom{i}{m-2} \\
        &= 2m + 2\biggl( \sum_{i=0}^{d+m-3} \tbinom{i}{m-2} - \sum_{i=0}^{m-1} \tbinom{i}{m-2} \biggr) \\
        &= 2m + 2\Bigl( \tbinom{d+m-2}{m-1} - \tbinom{m}{m-1}\Bigr) \\
        &= 2m + 2 (n+1-m) \\
        &= 2(n+1).
    \end{align*}
\end{proof}
    
\begin{corollary}\label{corollaryRangeJordanType}
    The Jordan types presented in Theorem \ref{mainTheorem} are in the following chain with respect to the dominance order:
    \begin{align*}
        \bigl( 2^{a_{min}},1^{b_{max}} \bigr) < \cdots &< \bigl( 2^{a_{max}},1^{b_{min}} \bigr) \\
        &< \bigl( d^2, (d-1)^{2\binom{m-1}{m-2}},(d-2)^{2\binom{m}{m-2}},\dots,1^{2\binom{m+d-3}{m-2}} \bigr) \\         
        &< \bigl( d+1, (d-1)^{2m-1},(d-2)^{2\binom{m}{m-2}},\dots,1^{2\binom{m+d-3}{m-2}} \bigr),
    \end{align*}
    where $a_{min}=d$,
    \begin{equation}\label{aMax}
    a_{max}=
    \begin{cases}
        \binom{\frac{d-1}{2}+m-1}{m-1} + 2\sum_{i=0}^{\frac{d-3}{2}}\binom{i+m-1}{m-1} \quad \text{if } d \text{ is odd} \\\\
        2\sum_{i=0}^{\frac{d-2}{2}}\binom{i+m-1}{m-1} \quad \text{if } d \text{ is even},
    \end{cases}
    \end{equation}
    $b_{max}=\dim_K(A_F)-a_{min}$, $b_{min}=\dim_K(A_F)-a_{max}$.
\end{corollary}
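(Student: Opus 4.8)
The plan is to handle the three Jordan types of Theorem \ref{mainTheorem} separately and then assemble them into the single chain, ordered by the dominance of Definition \ref{dominanceOrder}. Throughout I write $N=\dim_K A_F=2\binom{m+d-1}{m}$. The first task is to pin down the endpoints of the type-(iii) segment. Recall from case \ref{formsx} of Theorem \ref{mainTheorem} that for such an $\ell$ one has $\ell\circ F=\sum a_{i_1,\dots,i_m}Y_1^{i_1}\cdots Y_m^{i_m}$, an arbitrary element of $S'_{d-1}$ with $S'=K_{\mathrm{DP}}[Y_1,\dots,Y_m]$; hence $a=\dim_K A_{\ell\circ F}$ runs exactly over the values $\dim_K A_G$ as $G$ ranges over nonzero degree $d-1$ forms in $m$ variables. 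Since $A_G$ is Artinian Gorenstein of socle degree $d-1$, its Hilbert function $(h_0,\dots,h_{d-1})$ has all $h_i\ge 1$, so $a\ge d$, with equality precisely when every $h_i=1$, i.e.\ when $A_G\cong K[t]/(t^d)$ (take $G=Y_1^{d-1}$, that is $\ell=x_{d-1,0,\dots,0}$); this gives $a_{min}=d$. The maximum is attained for general $G$, where $A_G$ is compressed, so summing the compressed Hilbert function of codimension $m$ and socle degree $d-1$ from Definition \ref{compressed}, split by the parity of $d$, produces the formula \eqref{aMax} for $a_{max}$.

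Next I would establish the lower part of the chain. For partitions of a fixed integer $N$ of the shape $(2^a,1^{\,N-2a})$, a direct comparison of partial sums shows that a larger value of $a$ dominates a smaller one; consequently the type-(iii) Jordan types form a chain ordered by $a$, running from $a_{min}=d$ up to $a_{max}$, which is the string $\bigl(2^{a_{min}},1^{b_{max}}\bigr)<\cdots<\bigl(2^{a_{max}},1^{b_{min}}\bigr)$.

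The crux is to compare the top type-(iii) partition $\bigl(2^{a_{max}},1^{b_{min}}\bigr)$ with the middle type \ref{jordanTypeMiddle}. Here I would pass to conjugate partitions, using that on partitions of a fixed $N$ dominance is reversed by conjugation. The conjugate of $(2^{a},1^{\,N-2a})$ is the two-part partition $(N-a,a)$, so for any partition $P$ of $N$ one has $P\ge(2^{a},1^{\,N-2a})$ if and only if $N-a$ is at least the first part of the conjugate $P^{*}$, that is, if and only if $P$ has at most $N-a$ parts. Applying this with $P$ the middle type, whose number of parts is $2(n+1)$ by the computation in Corollary \ref{partsOfGenericJordanTypeFullPerazzo}, the desired dominance reduces to the single inequality $a_{max}\le N-2(n+1)=\dim_K A_F-2(n+1)$. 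I would verify this by rewriting $\dim_K A_F-a_{max}$ as twice a tail of binomial coefficients (together with the central term when $d$ is odd) via the hockey-stick identity $\binom{m+d-1}{m}=\sum_{i=0}^{d-1}\binom{i+m-1}{m-1}$: its top summand alone already equals $2\binom{m+d-2}{m-1}=2(n+1)$, so the inequality holds. As the two partitions are visibly distinct (one has a part of size $d\ge 3$, the other only parts $\le 2$), the relation is strict.

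Finally, to obtain the dominance of case \ref{jordanTypeMax} over case \ref{jordanTypeMiddle}, I would compare the two partitions directly, since they share the same tail $(d-2)^{2\binom{m}{m-2}},\dots,1^{2\binom{m+d-3}{m-2}}$ and, by the same count as in Corollary \ref{partsOfGenericJordanTypeFullPerazzo}, the same number $2(n+1)$ of parts. Only their leading blocks differ, namely $(d,d,(d-1)^{2m-2})$ for the middle type against $(d+1,(d-1)^{2m-1})$ for the maximal one; a one-line check shows that the maximal partition exceeds the middle one by $1$ at $k=1$ and agrees with it for every $k\ge 2$, giving the last (strict, by distinctness) dominance and completing the chain. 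The main obstacle is the conjugate-partition reduction of the third step together with the binomial inequality $a_{max}\le\dim_K A_F-2(n+1)$; once the comparison is recast in terms of the number of parts, the remaining estimates are routine.
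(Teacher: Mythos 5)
Your proof is correct, and its substantive core coincides with the paper's own argument: like the paper, you identify $a_{min}$ and $a_{max}$ by observing that for $\ell$ as in case \ref{formsx} the contraction $\ell\circ F$ ranges over all forms $G$ of degree $d-1$ in the $Y$'s, so that $a=\dim_K A_G$ runs over dimensions of Artinian Gorenstein algebras in $m$ variables of socle degree $d-1$; the minimum $a_{min}=d$ comes from the Hilbert function $(1,\dots,1)$ (the paper takes $G_{min}=Y_m^{d-1}$, you take $Y_1^{d-1}$), and the maximum from a compressed algebra, whose Hilbert function from Definition \ref{compressed} sums to \eqref{aMax} (the paper exhibits the explicit compressed example $G_{max}=h_{d-1}(Y_1,\dots,Y_m)$, you appeal to compressedness of a general $G$; both are legitimate). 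Where you genuinely go beyond the paper is in proving the dominance relations themselves: the paper dismisses the whole chain with ``the chain is clear,'' whereas you verify it. Your verification is sound: ordering the partitions $(2^a,1^{N-2a})$ of $N=\dim_K A_F$ by $a$ is an immediate partial-sum check; the conjugation reduction, namely that $P\ge(2^a,1^{N-2a})$ if and only if $P$ has at most $N-a$ parts, turns the key comparison into the inequality $2(n+1)\le N-a_{max}$, which holds because by the hockey-stick identity $N-a_{max}$ is a sum of terms $2\binom{i+m-1}{m-1}$ over the top half of the range (plus the central term when $d$ is odd), and the single term $i=d-1$ already equals $2\binom{m+d-2}{m-1}=2(n+1)$; finally, for types \ref{jordanTypeMiddle} and \ref{jordanTypeMax} the partial sums indeed agree for all $k\ge 2$ and differ by one at $k=1$. (One small point worth noting: the paper's Definition \ref{dominanceOrder} truncates at $\min\{s,r\}$, but for partitions of the same integer this agrees with the usual zero-padded dominance, so the conjugation anti-isomorphism you invoke is available.) In short, your proposal reproduces the paper's determination of the endpoints and additionally supplies the ordering argument that the paper leaves to the reader.
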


\begin{proof}
    The chain is clear, due to the Jordan types just obtained. Now, notice that $A_{\ell\circ F}$ is Gorenstein \cite[Proposition 3.1]{MP}, Artinian and with socle degree $d-1$. So, in the case \ref{jordanTypeMin} there exists $G\in K[Y_1,\dots,Y_m]_{d-1}$ such that $A_{\ell\circ F}\cong \frac{K[y_1,\dots,y_m]}{\Ann_{K[y_1,\dots,y_m]}G}$. Moreover, for any $h$\h{vector} associated to such an algebra there exists 
    $$
    \ell=\sum_{i_1+\cdots+i_m=d-1}a_{i_1,\dots,i_m}x_{i_1,\dots,i_m}
    $$
    such that $\ell\circ F=G$ (take $F=LG$, where $\ell\circ L=1$). So, there is $G_{max}$ whose associated Gorenstein algebra $A_{G_{max}}$ is a compressed Gorenstein algebra of codimension $m$ and socle degree $d-1$ (take for instance the homogeneous symmetric polynomial of degree $d-1$, i.e.\ $G_{max}:=h_{d-1}(Y_1,\dots,Y_m)$) and hence the number of length two strings is the cited in \eqref{aMax}. Also there exists a form $G_{min}$ whose associated Gorenstein algebra $A_{G_{min}}$ (take for instance $G_{min}=Y_m^{d-1}$) has the minimum $h$\h{vector}, i.e.\ $(1,1,\dots,1)$, and hence $a_{min}=d$.    
\end{proof}

In particular, for $m=2$ we have the following result.

\begin{proposition}
Let 
$$
F=X_{d-1,0}Y_1^{d-1}+X_{d-2,1}Y_1^{d-1}Y_2+\cdots+X_{1,d-2}Y_1Y_2^{d-2}+X_{0,d-1}Y_2^{d-1} \in \mathcal{D}_d
$$
    and consider the linear form
    $$
    \ell=\sum_{i=0}^{d-1}a_{d-1-i,i}x_{d-1-i,i}\in [A_F]_1.
    $$
    Then the $h$\h{vector} of $A_{\ell\circ F}$ is
    $$
    (1,2,\dots,s-1,s,s,\dots,s,s,s-1,\dots,2,1),
    $$
    where
    $$
    s=rk\begin{pmatrix}
        a_{d-1,0} & a_{d-2,1} & \cdots & a_{d-1-r,r} \\
        a_{d-2,1} & a_{d-3,2} & \cdots & a_{d-2-r,r+1} \\
        \vdots & \vdots & \ddots & \vdots \\
        a_{r,d-1-r} & a_{r-1,d-r} & \cdots & a_{0,d-1}
    \end{pmatrix}
    $$
    with
    $$
    r:=\begin{cases}
        \frac{d-1}{2} \ \text{if } d \text{ is odd} \\
        \frac{d-2}{2} \ \text{if } d \text{ is even}.
    \end{cases}
    $$
\end{proposition}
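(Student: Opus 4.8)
The plan is to reduce the statement to the classical theory of catalecticants of binary forms, using the reduction already carried out in case \ref{formsx} of Theorem \ref{mainTheorem}. Since $\ell$ involves only the $x$-variables, we are precisely in that case, so $P_{\ell,A_F}=(2^a,1^b)$ with $a=\dim_K A_{\ell\circ F}$; hence it suffices to determine the whole $h$-vector of $A_{\ell\circ F}$. First I would compute $\ell\circ F$ explicitly. Because each divided-power variable $X_{d-1-j,j}$ occurs to the first power in $F$, contraction by $x_{d-1-j,j}$ removes that variable cleanly and annihilates every other summand, so that $G:=\ell\circ F=\sum_{j=0}^{d-1}a_{d-1-j,j}Y_1^{d-1-j}Y_2^{j}$ is a binary form of degree $e:=d-1$ in $K_{\mathrm{DP}}[Y_1,Y_2]$, with no spurious scalar factors. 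Consequently $A_{\ell\circ F}\cong K[y_1,y_2]/\Ann(G)$ is a standard graded Artinian Gorenstein algebra of embedding dimension at most $2$ and socle degree $e$ (cf.\ \cite[Proposition 3.1]{MP}).

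Next I would invoke the structure of codimension-two Gorenstein ideals: $\Ann(G)$ is a complete intersection $(\theta_1,\theta_2)$ with $\deg\theta_1+\deg\theta_2=e+2$ and $s:=\deg\theta_1\le\deg\theta_2$. Reading off the Hilbert series $(1-t^{s})(1-t^{e+2-s})/(1-t)^2=(1+\cdots+t^{s-1})(1+\cdots+t^{e+1-s})$ directly gives $HF_{A_{\ell\circ F}}=(1,2,\dots,s-1,s,s,\dots,s,s-1,\dots,2,1)$, namely a rising segment, a plateau of constant value $s$, and the symmetric falling segment forced by Gorenstein duality. This already produces the shape of the $h$-vector predicted in the statement, so the only remaining task is to identify the plateau value $s$ with the advertised rank.

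Finally I would match $s$ with the rank of the displayed Hankel matrix. By Macaulay--Matlis duality $HF_{A_{\ell\circ F}}(t)=\dim_K\langle\, y^{\alpha}\circ G:\ |\alpha|=t\,\rangle=\rank\operatorname{Cat}_t(G)$, and since the contraction of each monomial $Y_1^{e-k}Y_2^{k}$ carries coefficient $1$, the $(u,w)$-entry of $\operatorname{Cat}_t(G)$ is exactly $a_{d-1-(u+w),\,u+w}$, i.e.\ a Hankel matrix in the coefficients of $G$. A short bookkeeping check then shows that the matrix in the statement is the \emph{central} catalecticant: $\operatorname{Cat}_r(G)$, of size $(r+1)\times(r+1)$, when $d$ is odd, and $\operatorname{Cat}_{r+1}(G)$, of size $(r+2)\times(r+1)$, when $d$ is even. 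The main point, and the step I expect to be the real obstacle, is arguing that this central catalecticant attains the maximal possible rank, namely the plateau value $s$. For binary forms one has $\rank\operatorname{Cat}_t(G)=\min(t+1,\,e-t+1,\,s)$; at the central index $\min(t+1,e-t+1)=\lfloor e/2\rfloor+1\ge s$, so the rank there equals $s$. In the even case the central catalecticant is not square, so I would invoke the equality of the ranks of $\operatorname{Cat}_{r+1}(G)$ and its transpose $\operatorname{Cat}_{r}(G)$, both of which compute the plateau value $HF_{A_{\ell\circ F}}(r)=s$. This identifies the displayed rank with $s$ and completes the argument.
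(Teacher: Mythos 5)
Your proof is correct and follows essentially the same route as the paper: reduce to the binary form $G=\ell\circ F$, invoke the fact that an Artinian Gorenstein algebra of codimension two is a complete intersection to get the shape of the $h$-vector, and then use Macaulay--Matlis duality to identify the plateau value $s$ with the rank of the displayed Hankel matrix. The only difference is presentational: the paper phrases the last step directly as the dimension of the span of the degree-$r$ partials of $\ell\circ F$ (the rows of the matrix), whereas you route it through the catalecticant rank formula and the square/non-square case distinction, which the paper's uniform ``span of partials'' viewpoint makes unnecessary.
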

\begin{proof}
Notice that $\Ann_R(\ell\circ F)\in K[y_1,y_2]$ and hence $A_{\ell\circ F}$ is a complete intersection because it is Artinian Gorenstein and has codimension two. 
As a consequence, the $h$\h{vector} is the cited one, with $s=\dim_K\bigl[ A_{\ell\circ F} \bigr]_r$. 
Now, using Macaulay\h{Matlis} duality, we can compute $s$ as the dimension of the vector subspace of partials of $\ell\circ F$ of degree $r$, i.e.\     
    $$s = \dim
    \begin{amatrix}
    a_{d-1,0}Y_1^r + a_{d-2,1}Y_1^{r-1}Y_2 + \cdots + a_{d-1-r,r}Y_2^r, \\
    a_{d-2,1}Y_1^r + a_{d-3,2}Y_1^{r-1}Y_2 + \cdots + a_{d-2-r,r+1}Y_2^r, \\
    \vdots \\
    a_{r,d-1-r}Y_1^r + a_{r-1,d-r}Y_1^{r-1}Y_2 + \cdots + a_{0,d-1}Y_2^r
    \end{amatrix}.
    $$    
\end{proof}
\begin{remark}
    Let $M$ be the projective space $\PP^{(d-r)(r+1)-1}$ associated to the vector space of $(d-r)\times(r+1)$ matrices. Then, $P_{\ell,A_F}$ is completely characterized by the values of $a_{d-1-i,i}$ in $\ell$, that is, the corresponding point in the determinantal variety $M_s \subset M$, cut out by the $(s+1)\times(s+1)$ minor determinants of the matrices in $M$.
\end{remark}

\begin{remark}
For \(m=3\), the possible Hilbert functions for graded Artinian Gorenstein algebras are also known (see Proposition~3.9 in \cite{D}, and Theorem~B.17 in \cite{IK}). 
\end{remark}

\printbibliography

\end{document}